\def\version{[arXiv 1: 09/05/2013]}
\theoremstyle{plain}
\newtheorem{thm}{Theorem}[section]
\newtheorem{lem}[thm]{Lemma}
\newtheorem{prop}[thm]{Proposition}
\newtheorem{cor}[thm]{Corollary}
\theoremstyle{definition}
\numberwithin{equation}{section}
\def\ie{\emph{i.e.}}
\def\ds{\displaystyle}
\def\:{\colon}
\def\.{\cdot}
\def\<{\left\langle}
\def\>{\right\rangle}
\def\({\left(}
\def\){\right)}
\def\ph#1{\phantom{#1}}
\def\epsilon{\varepsilon}
\def\phi{\varphi}
\def\subset{\subseteq}
\def\leq{\leqslant}
\def\geq{\geqslant}
\def\la{\leftarrow}
\def\ra{\rightarrow}
\def\lla{\longleftarrow}
\def\lra{\longrightarrow}
\def\Lra{\Longrightarrow}
\def\bar#1{\overline{#1}}
\def\tilde#1{\widetilde{#1}}
\def\iso{\cong}
\DeclareMathOperator{\DERIV}{d}\renewcommand{\d}{\DERIV}
\DeclareMathOperator{\im}{im}
\DeclareMathOperator{\coker}{coker}
\def\E{\mathrm{E}}
\def\k{\Bbbk}
\def\Z{\mathbb{Z}}
\def\oTimes#1{{\ds\mathop{\otimes}_{#1}}}
\def\ideal{\triangleleft}
\DeclareMathOperator{\Ext}{Ext}
\DeclareMathOperator{\Hom}{Hom}
\DeclareMathOperator{\Tor}{Tor}
\def\del{\partial}
\def\dh{\d^{\mathrm{h}}}
\def\dv{\d^{\mathrm{v}}}
\DeclareMathOperator{\TOT}{Tot}
\def\Tot{\TOT^\oplus}
\def\dt{\d^{\TOT}}
\DeclareMathOperator{\bideg}{bideg}
\begin{document}
\title[On the homology of regular quotients]
{On the homology of regular quotients}
\author{Andrew Baker}
\address{School of Mathematics \& Statistics,
University of Glasgow, Glasgow G12 8QW, Scotland.}
\email{a.baker@maths.gla.ac.uk}
\urladdr{http://www.maths.gla.ac.uk/$\sim$ajb}
\subjclass[2010]{Primary 13D02, 13D07; Secondary 55U15}
\keywords{Regular sequence, resolution, Koszul complex, }
\thanks{\hfill\version \\
I would like to thank Alain Jeanneret, Andrey Lazarev
and Samuel W\"uthrich for helpful comments, also
Peter May and Larry Smith for pointing out the related
papers~\cite{Smith:EM,Tate}, and finally the Glasgow
Derived Categories Seminar which provided an opportunity
for the author to learn some useful algebra.
}
\begin{abstract}
We construct a free resolution of $R/I^s$ over $R$ where $I\ideal R$ is
generated by a (finite or infinite) regular sequence. This generalizes
the Koszul complex for the case $s=1$. For $s>1$, we easily deduce that
the algebra structure of $\Tor^R_*(R/I,R/I^s)$ is trivial and the reduction
map $R/I^s\lra R/I^{s-1}$ induces the trivial map of algebras.
\end{abstract}
\maketitle
\section*{Introduction}

Let $R$ be a commutative unital ring. We will say that an ideal
$I\ideal R$ is \emph{regular} if it is generated by a regular
sequence $u_1,u_2,\ldots$ which may be finite or infinite. We will
call the quotient ring $R/I$ a \emph{regular quotient} of $R$. All
tensor products and homomorphisms will be taken over $R$ unless
otherwise indicated.

It is well known, see~\cite{Weibel} for example, that there is a
\emph{Koszul resolution}
\[
\mathbf{K}_*\lra R/I\ra0,
\]
where
\[
\mathbf{K}_*=\Lambda_R(e_i:i\geq1)
\]
is a differential graded algebra with $e_i$ in degree~$1$ 
and differential given by $\d e_i=u_i$. The following 
result is standard, see for example~\cite{Matsumura,Weibel}.
\begin{prop}\label{prop:KoszulRes}
If $I\ideal R$ is regular, then $\mathbf{K}_*$ provides 
a free resolution of $R/I$ over $R$. Moreover, 
$(\mathbf{K}_*,\d)$ is a differential graded $R$-algebra.
\end{prop}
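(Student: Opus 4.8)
The plan is to verify three things: that each term $\mathbf{K}_n$ is free over $R$, that $(\mathbf{K}_*,\d)$ is a differential graded algebra, and that the augmented complex $\mathbf{K}_*\lra R/I\ra0$ is exact. Freeness is immediate, since $\mathbf{K}_n$ is spanned over $R$ by the monomials $e_{i_1}\cdots e_{i_n}$ with $i_1<\cdots<i_n$, and these form a basis of the exterior algebra.

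For the differential graded algebra structure, I would first check that $\d$, defined on generators by $\d e_i=u_i$ and extended by the graded Leibniz rule, is well defined and squares to zero. Since $\d$ is by construction an odd derivation, $\d^2$ is again a derivation; as $\d^2 e_i=\d u_i=0$ because $u_i\in R$ sits in degree~$0$, and $\d^2$ visibly vanishes on the degree-zero part $R$, it follows that $\d^2=0$ on all of $\mathbf{K}_*$. This establishes the DG algebra claim.

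The substance is exactness. For a finite sequence $u_1,\ldots,u_n$ I would argue by induction on $n$, exploiting the tensor decomposition $\mathbf{K}(u_1,\ldots,u_n)\iso\mathbf{K}(u_1,\ldots,u_{n-1})\oTimes{R}\mathbf{K}(u_n)$, where $\mathbf{K}(u_n)$ is the two-term complex $R\xrightarrow{u_n}R$. This identifies $\mathbf{K}(u_1,\ldots,u_n)$ with the mapping cone of multiplication by $u_n$ on $\mathbf{K}(u_1,\ldots,u_{n-1})$, yielding a long exact homology sequence. By the inductive hypothesis the homology of the smaller complex is concentrated in degree~$0$, where it equals $R/(u_1,\ldots,u_{n-1})$, and the regularity of the sequence says precisely that multiplication by $u_n$ is injective there. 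The long exact sequence then forces $H_p=0$ for $p>0$ and $H_0=R/(u_1,\ldots,u_n)$.

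For an infinite regular sequence I would pass to the colimit: $\mathbf{K}_*$ is the filtered colimit of the finite Koszul complexes $\mathbf{K}(u_1,\ldots,u_n)$ along the evident inclusions, and since homology commutes with filtered colimits, the positive homology vanishes and $H_0=\colim_n R/(u_1,\ldots,u_n)=R/I$. The main obstacle is the inductive step, where one must see that the regular-sequence hypothesis is exactly the input needed to kill the higher homology through the mapping-cone long exact sequence; the passage to the infinite case is then a routine colimit argument.
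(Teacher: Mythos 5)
The paper gives no proof of this proposition at all: it is stated as a standard fact with references to Matsumura and Weibel. Your argument is exactly the standard proof found in those sources --- freeness from the monomial basis of the exterior algebra, the derivation argument for $\d^2=0$, the mapping-cone induction on the length of the sequence using regularity to kill the higher homology, and a filtered-colimit passage to the infinite case --- and it is correct.
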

\begin{cor}\label{cor:KoszulRes}
As $R/I$-algebras,
\[
\Tor^{R}_*(R/I,R/I)=\Lambda_{R/I}(e_i:i\geq1).
\]
\end{cor}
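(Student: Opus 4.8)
The plan is to compute $\Tor^R_*(R/I, R/I)$ by tensoring the Koszul resolution $\mathbf{K}_*$ with $R/I$ and taking homology, then to identify the resulting algebra structure. Since $\mathbf{K}_* \lra R/I \ra 0$ is a free resolution by Proposition~\ref{prop:KoszulRes}, by definition
\[
\Tor^R_*(R/I, R/I) = H_*(\mathbf{K}_* \otimes R/I).
\]

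First I would examine the complex $\mathbf{K}_* \otimes R/I$. Tensoring the exterior algebra $\Lambda_R(e_i : i \geq 1)$ with $R/I$ yields $\Lambda_{R/I}(e_i : i \geq 1)$ as a graded $R/I$-module. The key observation is that the differential becomes trivial: since $\d e_i = u_i$ and each $u_i \in I$, the image $u_i$ is zero in $R/I$. Because $\mathbf{K}_*$ is a differential graded $R$-algebra, its differential satisfies the Leibniz rule, so the induced differential on $\mathbf{K}_* \otimes R/I$ is determined by its values on the generators $e_i$; as these all vanish, the differential on $\mathbf{K}_* \otimes R/I$ is identically zero.

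With the differential zero, the homology is just the module itself, so $\Tor^R_*(R/I, R/I) \iso \Lambda_{R/I}(e_i : i \geq 1)$ as graded $R/I$-modules. To promote this to an isomorphism of $R/I$-algebras, I would invoke the standard fact that when a free resolution carries a differential graded algebra structure, the induced product on $\Tor$ agrees with the product coming from the DGA. Since the differential on $\mathbf{K}_* \otimes R/I$ vanishes, every cycle is represented by an element of $\Lambda_{R/I}(e_i : i \geq 1)$, and the product of homology classes is computed directly as the product in the exterior algebra. Thus the algebra structure on $\Tor$ is precisely that of $\Lambda_{R/I}(e_i : i \geq 1)$.

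I do not expect a serious obstacle here, as this corollary is a routine consequence of Proposition~\ref{prop:KoszulRes}; the only point requiring a little care is the compatibility of the DGA product on $\mathbf{K}_*$ with the product on $\Tor$, which rests on the general principle that a multiplicative resolution computes the $\Tor$ algebra multiplicatively. One should confirm that this identification is natural and that no sign or grading subtleties intervene, but for the exterior algebra with generators in degree~$1$ this is entirely standard.
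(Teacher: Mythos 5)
Your argument is correct and is exactly the standard one the paper has in mind: the corollary is stated without proof as an immediate consequence of Proposition~\ref{prop:KoszulRes}, and your computation (the induced differential on $\mathbf{K}_*\otimes R/I$ vanishes because each $u_i\in I$, so the homology is the exterior algebra with its evident product) is the intended justification. No gaps.
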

We will generalize this by defining a family of free resolutions
\[
\mathbf{K}(R;I^s)_*\lra R/I^s\ra0\quad(s\geq1),
\]
which are well related and allow efficient calculation of
the $R/I$-algebra $\Tor^R_*(R/I,R/I^s)$.

The resolution we construct may well be known, however lacking a
convenient reference we give the details. Our immediate motivation
lies in topological calculations that are part of joint work with
A.~Jeanneret and A.~Lazarev~\cite{AB+AJ:Brave-BocksteinOps,AB+AL:Rmod-ASS},
but we believe this algebraic construction may be of wider interest.
Our approach to this construction was suggested by derived category
ideas and in particular the construction of Cartan-Eilenberg
resolutions~\cite{CE,Weibel}. Tate's method of killing homology
classes~\cite{Tate} seems to be related, as does Smith's work on
homological algebra~\cite{Smith:EM}, but neither appears to give
our result explicitly.

\subsection*{History}
This paper was originally written at the beginning of the Millennium
and appeared as 
\emph{Glasgow University Mathematics Department preprint no.~01/1}
but it was never formally published. Subsequently, Samuel W\"uthrich
developed the algebra further in~\cite{SW:HomPowRegId} and applied 
it in studying $I$-adic towers. However, our original approach 
still seems of interest and we feel it worthwhile making it more 
easily available. 

\subsection*{Notation}
Our indexing conventions are predominantly homological (\ie, lower
index) as opposed to cohomological, since that is appropriate for
the topological applications we have in mind. Consequently, complexes
have differentials which \emph{decrease} degrees.

For a complex $(C_*,d)$, we define its \emph{$k$-fold suspension}
$(C[-k]_*,d[-k])$ by
\[
C[-k]_n=C_{n-k},\quad d[-k]=(-1)^{k}d\:C_{n-k}\lra C_{n-k-1}.
\]
For an $R$-module $M$, we sometimes view $M$ as the complex with
\[
M_n=
\begin{cases}
M&\text{if $n=0$}, \\
0& \text{if $n\neq0$}.
\end{cases}
\]

\section{A resolution for $R/I^s$}\label{sec:Res-R/Is}

In this section we describe an explicit $R$-free resolution for $R/I^s$
which allows homological calculations. We begin with a standard result;
actually the cited proof applies when $I$ is finitely generated, but the
adaption to the general case is straightforward. We will always interpret
$I^0/I$ as $R/I$.
\begin{lem}[\cite{Matsumura}, Theorem 16.2]\label{lem:Is/I(s+1)-R/I-free}
For $s\geq0$, $I^s/I^{s+1}$ is a free $R/I$-module with a basis consisting
of the residue classes of the distinct monomials of degree $s$ in the $u_i$.
\end{lem}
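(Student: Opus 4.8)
The plan is to prove that $I^s/I^{s+1}$ is free over $R/I$ with the claimed monomial basis, working directly from the fact that the $u_i$ form a regular sequence. The statement has two parts: the residue classes of the degree-$s$ monomials \emph{span} $I^s/I^{s+1}$ over $R/I$, and they are \emph{linearly independent}. Spanning is immediate, since $I^s$ is by definition generated as an $R$-module (indeed as an ideal) by the degree-$s$ monomials in the $u_i$, so their classes generate $I^s/I^{s+1}$ over $R$, hence over $R/I$. The real content is independence.

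First I would set up the comparison with a polynomial ring. There is a natural surjection of graded $R/I$-algebras
\[
\phi\colon (R/I)[X_i : i\geq 1]\lra \bigoplus_{s\geq 0} I^s/I^{s+1},
\]
sending the degree-$s$ monomial in the $X_i$ to the class of the corresponding monomial in the $u_i$, where the target is the associated graded ring of the $I$-adic filtration. Proving the lemma amounts to showing $\phi$ is an isomorphism, equivalently that it is injective in each degree $s$; the basis statement is then exactly the assertion that the monomial images are independent. This reduces everything to the single claim that $\phi$ has no kernel.

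The key step, and the main obstacle, is establishing injectivity of $\phi$, and this is where regularity of the sequence $u_1,u_2,\ldots$ must be used in an essential way. For a finite regular sequence I would argue by induction on the number of generators, peeling off $u_n$: a relation among degree-$s$ monomials, lifted to $R$, expresses some element of $I^s$ in terms of the others, and the defining property of a regular sequence—that $u_n$ is a nonzerodivisor on $R/(u_1,\ldots,u_{n-1})$—lets me conclude that the coefficient of the top power of $u_n$ lies in $I$, so the putative relation is trivial modulo $I$. Iterating over the monomial structure gives independence. Concretely this is the standard computation of the associated graded ring of a regular sequence, namely $\mathrm{gr}_I(R)\cong (R/I)[X_i]$, which is precisely the cited Theorem~16.2 of~\cite{Matsumura}.

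Finally I would address the passage from the finitely generated case to a possibly infinite regular sequence. Here I would observe that any fixed degree-$s$ monomial and any finite relation among such monomials involves only finitely many of the $u_i$, so an element of $I^s$ representing a relation, together with its correction terms in $I^{s+1}$, lives in the subring generated by finitely many generators; applying the finite case to that subsequence suffices. The only care needed is to check that $I^{s+1}\cap (\text{ideal generated by the relevant } u_i)^s$ behaves compatibly, which follows because the $u_i$ remain a regular sequence after restriction. With these reductions in place the infinite case reduces cleanly to the finite one, completing the argument.
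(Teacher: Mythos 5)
Your proposal is correct and coincides with the paper's treatment: the paper gives no proof of this lemma, citing Theorem~16.2 of~\cite{Matsumura} (which is exactly the computation $\mathrm{gr}_I(R)\iso(R/I)[X_i]$ you invoke for the finite case) and remarking that the adaptation to an infinite regular sequence is straightforward. Your finiteness reduction --- enlarging the relevant finite subset of generators so that both the relation and its $I^{s+1}$-correction terms lie in a finitely generated subideal, then applying the finite case there --- is precisely that straightforward adaptation, carried out correctly.
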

\begin{cor}\label{cor:Is/I(s+1)-R/I-free}
For $s\geq0$, there is a free resolution of $I^s/I^{s+1}$ over $R$ of
the form
\[
\mathbf{Q}^{(s)}_*=
\mathbf{K}_*\otimes\mathrm{U}^{(s)}\lra I^s/I^{s+1}\ra0,
\]
where $\mathrm{U}^{(s)}$ is a free $R$-module on a basis indexed by
the distinct monomials of degree $s$ in the generators $u_i$.
\end{cor}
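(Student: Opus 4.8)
The plan is to produce $\mathbf{Q}^{(s)}_*$ by tensoring the Koszul resolution with the free module $\mathrm{U}^{(s)}$ and then reading off its homology from Lemma~\ref{lem:Is/I(s+1)-R/I-free}. Regarding $\mathrm{U}^{(s)}$ as a complex concentrated in degree~$0$, I would form the tensor product of complexes $\mathbf{K}_*\otimes\mathrm{U}^{(s)}$, whose $n$-th term is $\mathbf{K}_n\otimes\mathrm{U}^{(s)}$ with differential $\d\otimes\id$. Since $\mathbf{K}_n$ and $\mathrm{U}^{(s)}$ are each $R$-free, so is each term $\mathbf{K}_n\otimes\mathrm{U}^{(s)}$, with basis the products of the two respective bases.

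Next I would compute the homology. Because $\mathrm{U}^{(s)}$ is free, hence flat, tensoring with it is exact and commutes with the formation of homology; equivalently, $\mathbf{K}_*\otimes\mathrm{U}^{(s)}$ is just a direct sum of copies of $\mathbf{K}_*$ indexed by the chosen basis of $\mathrm{U}^{(s)}$. By Proposition~\ref{prop:KoszulRes}, $\mathbf{K}_*$ is acyclic in positive degrees with $H_0(\mathbf{K}_*)=R/I$, so
\[
H_n(\mathbf{K}_*\otimes\mathrm{U}^{(s)})=H_n(\mathbf{K}_*)\otimes\mathrm{U}^{(s)}=
\begin{cases}(R/I)\otimes\mathrm{U}^{(s)}&n=0,\\0&n>0.\end{cases}
\]
To finish I would identify this $H_0$ with $I^s/I^{s+1}$: Lemma~\ref{lem:Is/I(s+1)-R/I-free} supplies an isomorphism of $R/I$-modules $(R/I)\otimes\mathrm{U}^{(s)}\iso I^s/I^{s+1}$ carrying the basis element indexed by a degree-$s$ monomial $m$ in the $u_i$ to the class $m+I^{s+1}$. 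Composing the augmentation $\mathbf{K}_*\lra R/I$ (tensored with $\id$) with this isomorphism gives the required augmentation $\mathbf{Q}^{(s)}_0=\mathrm{U}^{(s)}\lra I^s/I^{s+1}$ and exhibits $\mathbf{Q}^{(s)}_*$ as a free resolution.

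The calculation is essentially routine, and the only step needing a little care is the infinite case, where both $\mathbf{K}_*$ and $\mathrm{U}^{(s)}$ may have infinite rank. Here I would note that tensor product and homology each commute with arbitrary direct sums, so the identity $H_*(\mathbf{K}_*\otimes\mathrm{U}^{(s)})=H_*(\mathbf{K}_*)\otimes\mathrm{U}^{(s)}$ persists with no finiteness hypothesis and the freeness of the terms is likewise unaffected.
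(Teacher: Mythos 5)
Your argument is correct and is exactly the intended one: the paper offers no explicit proof of this corollary, since it follows immediately from Proposition~\ref{prop:KoszulRes} together with Lemma~\ref{lem:Is/I(s+1)-R/I-free} by viewing $\mathbf{K}_*\otimes\mathrm{U}^{(s)}$ as a direct sum of copies of the Koszul resolution indexed by the degree-$s$ monomials, just as you do. Your remark that homology commutes with arbitrary direct sums handles the infinitely generated case, which is precisely the point the paper flags as a ``straightforward adaption.''
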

For a sequence $\mathbf{i}=(i_1,\ldots,i_s)$ and its associated monomial
$u_{\mathbf{i}}=u_{i_1}\cdots u_{i_s}$, we will denote the corresponding
basis element $1\otimes u_{i_1}\cdots u_{i_s}$ of
$\mathbf{K}_*\otimes\mathrm{U}^{(s)}$ by $\tilde u_{\mathbf{i}}$ and more
generally $x\otimes\tilde u_{\mathbf{i}}$ by $x\tilde u_{\mathbf{i}}$. We
will also denote the differential on $\mathbf{Q}^{(s)}_*$ by
$\d_{\mathbf{Q}}^{(s)}$, noting that
\begin{equation}\label{eqn:d(s)-Qs}
\d_{\mathbf{Q}}^{(s)}x\tilde{u_{\mathbf{i}}}=(\d x)\tilde{u_{\mathbf{i}}}.
\end{equation}
For $s\geq0$, there is also a map
\[
\del^{(s+1)}\:\mathbf{Q}^{(s)}_*\lra\mathbf{Q}^{(s+1)}_{*-1};
\quad
\del^{(s+1)}\sum_{\mathbf{i}}y_{\mathbf{i}}\tilde u_{\mathbf{i}}
=\sum_{\mathbf{i}}(\d y_{\mathbf{i}})\tilde u_{\mathbf{i}},
\]
where we interpret the products for $y_{(i_1,\ldots,i_s)}\in\mathbf{K}_*$
according to the formula
\begin{align*}
(\d y_{(i_1,\ldots,i_s)})\tilde u_{(i_1,\ldots,i_s)}&=
\sum_{j}y_{(i_1,\ldots,i_s),j}\;\tilde u_{(i_1,\ldots,i_s,j)} \\
\intertext{with}
\d y_{(i_1,\ldots,i_s)}&=
\sum_{(i_1,\ldots,i_s),j}y_{(i_1,\ldots,i_s),j}\tilde u_j.
\end{align*}

For $s\geq1$, define
\[
\mathbf{K}(R;I^s)_*=
\mathbf{Q}^{(0)}_*\oplus\mathbf{Q}^{(1)}_*\oplus
\cdots\oplus\mathbf{Q}^{(s-1)}_*,
\]
with the differential $\d^{(s)}$ given by
\begin{equation}\label{eqn:d(s)-Defn}
\d^{(s)}(x_0,x_1,\ldots,x_{s-1})=(x'_0,x'_1,\ldots,x'_{s-1}),
\end{equation}
where
\[
x'_k=
\begin{cases}
\d_{\mathbf{Q}}^{(0)}x_0& \text{if $k=0$}, \\
\del^{(k)}x_{k-1}+\d_{\mathbf{Q}}^{(k)}x_k& \text{otherwise}.
\end{cases}
\]
We need to show that $(\d^{(s)})^2=0$. This follows from the following
easily verified identities which hold for all $r\geq0$:
\begin{align}
\d_{\mathbf{Q}}^{(r+1)}\del^{(r+1)}+\del^{(r+1)}\d_{\mathbf{Q}}^{(r)}&=0,
\label{eqn:del-dQ}\\
\del^{(r+1)}\del^{(r)}&=0.
\label{eqn:del^2}
\end{align}
Then
\[
(\d^{(s)})^2(x_0,x_1,\ldots,x_{s-1})=(x''_0,x''_1,\ldots,x''_{s-1}),
\]
where
\begin{align*}
x''_0&=(\d^{(0)})^2x_0=0, \\
x''_1&=
\del^{(1)}\d_{\mathbf{Q}}^{(0)}x_0+\d^{(1)}\del^{(1)}x_0+(\d^{(1)})^2x_1=0, \\
\intertext{while for $2\leq k\leq s-1$,}
x''_k&=
\del^{(k)}\del^{(k-1)}x_{k-2}
+\del^{(k)}\d_{\mathbf{Q}}^{(k-1)}x_{k-1}+
\d_{\mathbf{Q}}^{(k)}\del^{(k)}x_{k-1}+(\d_{\mathbf{Q}}^{(k)})^2x_{k})=0.
\end{align*}

There is an augmentation map
\[
\epsilon^{(s)}\:\mathbf{K}(R;I^s)_0\lra R/I^s,
\]
namely the $R$-module homomorphism
\begin{multline*}
\epsilon^{(s)}
\(a_0,\sum_{(i_1)}a_{(i_1)}\tilde u_{(i_1)},
\sum_{(i_1,i_2)}a_{(i_1,i_2)}\tilde u_{(i_1,i_2)},\ldots,
\sum_{(i_1,i_2,\ldots,i_{s-1})}
a_{(i_1,i_2,\ldots,i_{s-1})}\tilde u_{(i_1,i_2,\ldots,i_{s-1})}\) \\
=
a_0+\sum_{(i_1)}a_{(i_1)}u_{(i_1)}
+
\sum_{(i_1,i_2)}a_{(i_1,i_2)}u_{(i_1,i_2)}
+\cdots+
\sum_{(i_1,i_2,\ldots,i_{s-1})}
a_{(i_1,i_2,\ldots,i_{s-1})}u_{(i_1,i_2,\ldots,i_{s-1})},
\end{multline*}
in which the sum $\ds\sum_{(i_1,i_2,\ldots,i_{k})}$ is taken over all
the distinct monomials $u_{(i_1,i_2,\ldots,i_{k})}=u_{i_1}\cdots u_{i_k}$
of degree $k$ and $a_{(i_1,i_2,\ldots,i_{k})}\in R$. Then $\epsilon^{(s)}$
is surjective and in $\mathbf{K}(R;I^s)_0$ we have
\[
\im\d^{(s)}\subset\ker\epsilon^{(s)}.
\]
On the other hand, suppose that
\[
\mathbf{a}=(a_0,\tilde a_1,\ldots,\tilde a_{s-1})\in\ker\epsilon^{(s)},
\]
where
\[
\tilde a_k=
\sum_{(i_1,\ldots,i_k)}a_{(i_1,\ldots,i_k)}\tilde u_{(i_1,\ldots,i_k)}.
\]
Then writing
\[
a_k=\sum_{(i_1,\ldots,i_k)}a_{(i_1,\ldots,i_k)}u_{(i_1,\ldots,i_k)},
\]
we find
\[
a_0+a_1+\cdots+a_{s-1}\in I^s,
\]
so $a_0\in I$. This means that
\[
\mathbf{a}\equiv(0,\tilde b_1,\tilde a_2\ldots,\tilde a_{s-1})\mod\im\d^{(s)}.
\]
Repeating this argument modulo higher powers of $I$, we find that
\begin{align*}
\mathbf{a}&\equiv(0,0,\ldots,0,\tilde b_{s-1})\mod\im\d^{(s)}, \\
\intertext{where}
\tilde b_{s-1}&=
\sum_{(i_1,\ldots,i_{s-1})}
a_{(i_1,\ldots,i_{s-1})}\tilde u_{(i_1,\ldots,i_{s-1})} \\
\intertext{and}
b_{s-1}&=
\sum_{(i_1,\ldots,i_{s-1})}
a_{(i_1,\ldots,i_{s-1})}u_{(i_1,\ldots,i_{s-1})}\in I^s.
\end{align*}
But taking
\[
c=
\sum_{(i_1,\ldots,i_{s-1})}
a_{(i_1,\ldots,i_{s-1})}\tau_{i_{s-1}}\tilde
u_{(i_1,\ldots,i_{s-2})},
\]
we find
\[
\d^{(s)}(0,\ldots,0,c)=(0,0,\ldots,0,\tilde b_{s-1}).
\]
Hence $\mathbf{a}\in\im\d^{(s)}$. This shows that
\[
\ker\epsilon^{(s)}=\im\d^{(s)}.
\]

Suppose that $n\geq1$ and
\[
\mathbf{x}=(x_0,x_1,\ldots,x_{s-1})\in\mathbf{K}(R;I^s)_n
\]
satisfies $\d^{(s)}\mathbf{x}=0$. Then $x'_0=0$ and so by exactness
of $\mathbf{Q}^{(0)}_*$,
\[
x_0=\d_{\mathbf{Q}}^{(0)}y_0
\]
for some $y_0\in\mathbf{Q}^{(0)}_{n+1}$. Then
\begin{align*}
0&=x'_1=\del^{(1)}\d_{\mathbf{Q}}^{(0)}y_0+\d_{\mathbf{Q}}^{(1)}x_1 \\
&=\d_{\mathbf{Q}}^{(1)}(-\del^{(1)}y_0+x_1),
\end{align*}
hence by exactness of $\mathbf{Q}^{(1)}_*$,
\[
x_1=\d_{\mathbf{Q}}^{(1)}y_1+\del^{(1)}y_0
\]
for some $y_1\in\mathbf{Q}^{(1)}_{n+1}$. Continuing in this way,
eventually we obtain an element
\[
(y_0,y_1,\ldots,y_{s-1})\in\mathbf{K}(R;I^s)_{n+1}
\]
for which
\[
x_k=\d_{\mathbf{Q}}^{(k)}y_k+\del^{(k)}y_{k-1}\quad(1\leq k\leq s-1).
\]

\begin{thm}\label{thm:R/Is-ProjRes}
For $s\geq1$,
\[
\mathbf{K}(R;I^s)_*\xrightarrow{\epsilon^{(s)}}R/I^s\ra0
\]
is a resolution by free $R$-modules.
\end{thm}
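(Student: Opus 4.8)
The plan is to collect the verifications carried out in the discussion preceding the statement and organise them around the definition of a free resolution. Freeness is immediate: by construction $\mathbf{K}(R;I^s)_n=\bigoplus_{k=0}^{s-1}(\mathbf{Q}^{(k)}_*)_n$, and each summand $\mathbf{Q}^{(k)}_*=\mathbf{K}_*\otimes\mathrm{U}^{(k)}$ is $R$-free because $\mathbf{K}_*$ is free (Proposition~\ref{prop:KoszulRes}) and $\mathrm{U}^{(k)}$ is free. That $\d^{(s)}$ is a differential is the identity $(\d^{(s)})^2=0$, already verified above using \eqref{eqn:del-dQ} and \eqref{eqn:del^2}. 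Surjectivity of $\epsilon^{(s)}$ is clear, so the substance is exactness of
\[
\mathbf{K}(R;I^s)_*\xrightarrow{\epsilon^{(s)}}R/I^s\ra0,
\]
which I would treat in the two cases $n=0$ and $n\geq1$.

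For $n=0$ the computation above already establishes $\ker\epsilon^{(s)}=\im\d^{(s)}$: from $\mathbf a\in\ker\epsilon^{(s)}$ one reads off $a_0\in I$, subtracts a suitable boundary to remove the first coordinate, and repeats modulo successively higher powers of $I$ until only the top term $\tilde b_{s-1}$ survives, which is then realised as $\d^{(s)}(0,\ldots,0,c)$ for the explicit $c$ exhibited above. For $n\geq1$ I would lift a cycle to a boundary one coordinate at a time: if $\d^{(s)}\mathbf x=0$ then $\d_{\mathbf{Q}}^{(0)}x_0=0$, so acyclicity of $\mathbf{Q}^{(0)}_*$ in positive degrees gives $x_0=\d_{\mathbf{Q}}^{(0)}y_0$; substituting into the equation $x'_1=0$ and applying \eqref{eqn:del-dQ} rewrites it as $\d_{\mathbf{Q}}^{(1)}(x_1-\del^{(1)}y_0)=0$, whence acyclicity of $\mathbf{Q}^{(1)}_*$ yields $y_1$ with $x_1=\d_{\mathbf{Q}}^{(1)}y_1+\del^{(1)}y_0$, and inductively one assembles $\mathbf y=(y_0,\ldots,y_{s-1})$ with $\d^{(s)}\mathbf y=\mathbf x$.

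The step carrying the real content is this positive-degree exactness, and the delicate point is the bookkeeping by which the correction $\del^{(k)}y_{k-1}$ coming from the previous coordinate is absorbed: relation~\eqref{eqn:del-dQ} is exactly what converts the vanishing of $x'_k$ into a $\d_{\mathbf{Q}}^{(k)}$-cycle once that correction is made, and the acyclicity of each $\mathbf{Q}^{(k)}_*$ in positive degrees (Corollary~\ref{cor:Is/I(s+1)-R/I-free}) is what keeps the induction moving forward.

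Conceptually I would prefer to repackage the whole argument as an induction on $s$ built from the short exact sequence of complexes
\[
0\ra\mathbf{Q}^{(s-1)}_*\ra\mathbf{K}(R;I^s)_*\ra\mathbf{K}(R;I^{s-1})_*\ra0,
\]
in which the last coordinate is a subcomplex carrying the differential $\d_{\mathbf{Q}}^{(s-1)}$ and the first $s-1$ coordinates form the quotient $\mathbf{K}(R;I^{s-1})_*$ with differential $\d^{(s-1)}$. Since $H_*(\mathbf{Q}^{(s-1)}_*)=I^{s-1}/I^s$ and, by induction, $H_*(\mathbf{K}(R;I^{s-1})_*)=R/I^{s-1}$, both concentrated in degree $0$ (Corollary~\ref{cor:Is/I(s+1)-R/I-free} and the inductive hypothesis, with base case the Koszul resolution of Proposition~\ref{prop:KoszulRes}), the long exact homology sequence immediately kills $H_n$ for $n\geq1$ and leaves $0\ra I^{s-1}/I^s\ra H_0\ra R/I^{s-1}\ra0$, which one identifies with the standard extension $R/I^s$ compatibly with $\epsilon^{(s)}$.
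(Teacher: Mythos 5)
Your main argument is correct and is essentially the paper's own proof: the paper places all of the verifications (freeness of the $\mathbf{Q}^{(k)}_*$, the identity $(\d^{(s)})^2=0$ via \eqref{eqn:del-dQ} and \eqref{eqn:del^2}, the computation $\ker\epsilon^{(s)}=\im\d^{(s)}$ in degree $0$, and the coordinate-by-coordinate lifting of cycles in positive degrees) in the discussion immediately preceding the theorem, and your first three paragraphs reorganise exactly that material, correctly identifying \eqref{eqn:del-dQ} and the acyclicity of each $\mathbf{Q}^{(k)}_*$ as the two ingredients that drive the induction on coordinates. Your proposed repackaging via the short exact sequence of complexes $0\ra\mathbf{Q}^{(s-1)}_*\ra\mathbf{K}(R;I^s)_*\ra\mathbf{K}(R;I^{s-1})_*\ra0$ is also sound (the last coordinate is indeed a subcomplex and the quotient carries $\d^{(s-1)}$), and the long exact sequence plus a five-lemma identification of $H_0$ with the extension $0\ra I^{s-1}/I^s\ra R/I^s\ra R/I^{s-1}\ra0$ does give a cleaner inductive proof; but note that this is precisely the splicing construction of Proposition~\ref{prop:SES-SplicingRes} in the Appendix (Section~\ref{sec:ResExtns}), which the author says is how the complex was found in the first place. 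What the paper's hands-on version buys over the inductive one is the explicit description of cycles and boundaries that is then used in Section~\ref{sec:SS}; what your version buys is brevity and independence from the explicit degree-$0$ manipulation (including the element $c$ whose construction the paper only sketches). The one point you should spell out if you adopt the inductive route is the compatibility check that the induced surjection $H_0(\mathbf{K}(R;I^s)_*)\ra R/I^s$ coming from $\epsilon^{(s)}$ fits into a map of extensions over the identity on $I^{s-1}/I^s$ and $R/I^{s-1}$, so that the five lemma applies.
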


The complex $(\mathbf{K}(R;I^s)_*,\d^{(s)})$ has a multiplicative
structure coming from the pairings
\[
\mathbf{Q}^{(p)}_*\otimes\mathbf{Q}^{(q)}_*\lra\mathbf{Q}^{(p+q)}_*;
\quad
(x\tilde u_{(i_1,\ldots,i_p)})\otimes(y\tilde u_{(j_1,\ldots,j_q)})
\longmapsto
(xy)\tilde u_{(i_1,\ldots,i_p,j_1,\ldots,j_q)}.
\]
\begin{thm}\label{thm:R/Is-ProjResDGA}
For $s\geq1$, the complex $(\mathbf{K}(R;I^s)_*,\d^{(s)})$ is
a differential graded $R$-algebra, providing a multiplicative
resolution free resolution of $R/I^s$ over $R$.
\end{thm}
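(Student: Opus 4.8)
The plan is to realise $(\mathbf{K}(R;I^s)_*,\d^{(s)})$ as a quotient of one large differential graded $R$-algebra on which the multiplicative structure is transparent, and then to read off associativity, graded-commutativity and the Leibniz rule, together with compatibility of the augmentation. First I would assemble all of the $\mathbf{Q}^{(p)}_*$ into
\[
\tilde{\mathbf{K}}_*=\bigoplus_{p\geq0}\mathbf{Q}^{(p)}_*=\mathbf{K}_*\otimes\mathrm{U},
\qquad
\mathrm{U}=\bigoplus_{p\geq0}\mathrm{U}^{(p)},
\]
where $\mathrm{U}$ is given the structure of the polynomial $R$-algebra on generators $U_i:=\tilde u_{(i)}$, graded by monomial weight so that $\mathrm{U}^{(p)}$ is its weight-$p$ summand and $\tilde u_{\mathbf{i}}\,\tilde u_{\mathbf{j}}=\tilde u_{\mathbf{i},\mathbf{j}}$. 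Thus $\tilde{\mathbf{K}}_*\iso\Lambda_R(e_i:i\geq1)\otimes\mathrm{U}$ is the free graded-commutative $R$-algebra on the odd generators $e_i$ (homological degree $1$) and the even generators $U_i$ (homological degree $0$); in particular it is associative, unital and graded-commutative, and the pairings $\mathbf{Q}^{(p)}_*\otimes\mathbf{Q}^{(q)}_*\lra\mathbf{Q}^{(p+q)}_*$ displayed just before the theorem are exactly its multiplication.

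Next I would pin down the total differential. Because $\tilde{\mathbf{K}}_*$ is free graded-commutative and vanishes in negative degrees, there is a unique $R$-linear derivation $D$ of $\tilde{\mathbf{K}}_*$ with
\[
D e_i=u_i+U_i,
\qquad
D U_i=0
\]
(the second equation being forced, as there is nothing in degree $-1$). Expanding $D$ on a basis element $e_{i_1}\cdots e_{i_k}\,\tilde u_{\mathbf{i}}$ by the graded Leibniz rule splits the result into the terms carrying a factor $u_{i_l}$, which reproduce $\d_{\mathbf{Q}}^{(p)}$, and the terms carrying a factor $U_{i_l}$, which append $i_l$ to the monomial and reproduce $\del^{(p+1)}$; hence $D$ coincides with the sum $\d_{\mathbf{Q}}+\del$ of all the maps $\d_{\mathbf{Q}}^{(p)}$ and $\del^{(p+1)}$, and in particular $\d_{\mathbf{Q}}+\del$ is a derivation. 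The identity $D^2=0$ is then automatic, since $D^2$ is an even derivation vanishing on the generators; this simultaneously recovers \eqref{eqn:del-dQ} and \eqref{eqn:del^2} and shows that $(\tilde{\mathbf{K}}_*,D)$ is a differential graded $R$-algebra. I expect the sign comparison in this Leibniz expansion — matching the Koszul signs of $\d_{\mathbf{Q}}^{(p)}$ and $\del^{(p+1)}$ with those generated by the derivation rule — to be the one genuinely fiddly point, but it is a finite check on a single exterior monomial.

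It then remains to descend to the truncation. With $\mathrm{U}^{(\geq s)}=\bigoplus_{p\geq s}\mathrm{U}^{(p)}$, an ideal of $\mathrm{U}$, and $F^s=\mathbf{K}_*\otimes\mathrm{U}^{(\geq s)}$, we have
\[
\mathbf{K}(R;I^s)_*=\tilde{\mathbf{K}}_*/F^s.
\]
Here $F^s$ is an ideal of $\tilde{\mathbf{K}}_*$ because $\mathrm{U}^{(\geq s)}$ is an ideal of $\mathrm{U}$, and it is stable under $D$ because $D$ never lowers the monomial weight ($\d_{\mathbf{Q}}$ preserves it and $\del$ raises it by $1$, equivalently $DU_i=0$). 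So $F^s$ is a differential ideal, the quotient $\tilde{\mathbf{K}}_*/F^s$ inherits a differential graded $R$-algebra structure, and its induced product and differential are precisely the truncated pairing (discarding components of weight $\geq s$) and the map $\d^{(s)}$ of \eqref{eqn:d(s)-Defn}.

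Finally I would verify that the resolution is multiplicative, \ie\ that $\epsilon^{(s)}$ is a ring homomorphism. Since $\epsilon^{(s)}(\tilde u_{\mathbf{i}})=u_{\mathbf{i}}\bmod I^s$, one compares $\epsilon^{(s)}(\tilde u_{\mathbf{i}}\,\tilde u_{\mathbf{j}})$ with $\epsilon^{(s)}(\tilde u_{\mathbf{i}})\,\epsilon^{(s)}(\tilde u_{\mathbf{j}})=u_{\mathbf{i}}u_{\mathbf{j}}\bmod I^s$: if $|\mathbf{i}|+|\mathbf{j}|<s$ both are $u_{\mathbf{i},\mathbf{j}}\bmod I^s$, while if $|\mathbf{i}|+|\mathbf{j}|\geq s$ the product is $0$ in $\mathbf{K}(R;I^s)_*$ and $u_{\mathbf{i}}u_{\mathbf{j}}\in I^{|\mathbf{i}|+|\mathbf{j}|}\subset I^s$, so both sides vanish; the unit $1$ is sent to $1$. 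Combined with Theorem~\ref{thm:R/Is-ProjRes}, which already gives that $\mathbf{K}(R;I^s)_*\to R/I^s$ is a free resolution, this exhibits it as a multiplicative free resolution of $R/I^s$, completing the proof. The main obstacle throughout is the sign bookkeeping in identifying $D$ as the derivation with $De_i=u_i+U_i$; once the ambient DGA $\tilde{\mathbf{K}}_*$ and its differential ideal $F^s$ are in place, everything else is formal.
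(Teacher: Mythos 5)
Your proposal is correct, and it supplies an argument where the paper in fact gives essentially none: Theorem~\ref{thm:R/Is-ProjResDGA} is stated immediately after the pairings $\mathbf{Q}^{(p)}_*\otimes\mathbf{Q}^{(q)}_*\to\mathbf{Q}^{(p+q)}_*$ are written down, with the Leibniz rule, associativity and multiplicativity of $\epsilon^{(s)}$ left to the reader. Your route is genuinely different from the paper's implicit one (a direct check of the Leibniz rule against the identities \eqref{eqn:del-dQ} and \eqref{eqn:del^2}): you embed everything into the ambient free graded-commutative algebra $\tilde{\mathbf{K}}_*=\Lambda_R(e_i:i)\otimes R[U_i:i]$ with the derivation $De_i=u_i+U_i$, $DU_i=0$, and realise $\mathbf{K}(R;I^s)_*$ as the quotient by the differential ideal of weight $\geq s$. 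I have checked the one point you flag as fiddly: with the Koszul sign convention $\d(e_{i_1}\cdots e_{i_k})=\sum_l(-1)^{l-1}u_{i_l}e_{i_1}\cdots\widehat{e_{i_l}}\cdots e_{i_k}$, the $u_{i_l}$-terms of $D$ give $\d_{\mathbf{Q}}^{(p)}$ and the $U_{i_l}$-terms give $\del^{(p+1)}$ exactly as defined before \eqref{eqn:d(s)-Qs}, so $D=\d_{\mathbf{Q}}+\del$ (this also makes the paper's slightly informal decomposition $\d y=\sum_j y_j u_j$ canonical). What your approach buys is considerable: $D^2=0$ is automatic because $D^2$ is an even derivation killing the generators, which yields \eqref{eqn:del-dQ} and \eqref{eqn:del^2} simultaneously rather than by the separate ``easily verified'' computations of Section~\ref{sec:Res-R/Is}; moreover $(\tilde{\mathbf{K}}_*,D)$ is visibly the Koszul complex of the regular sequence $(u_i+U_i)_i$ in $R[U_j:j]$, which both explains the bigraded Koszul complex $\Lambda_{R/I[\tilde u_i:i]}(\tilde e_i:i)$ appearing in the proof of Lemma~\ref{lem:R/I-delexact} and anticipates W\"uthrich's later formulation. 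The only items worth adding for completeness are that $\epsilon^{(s)}\o\d^{(s)}=0$ in degree $1$ (so that the augmented complex is one of DGAs) is already contained in the proof of Theorem~\ref{thm:R/Is-ProjRes}, and that your multiplicativity check of $\epsilon^{(s)}$ on degree-zero basis elements is exactly where the hypothesis $u_{\mathbf{i}}u_{\mathbf{j}}\in I^{|\mathbf{i}|+|\mathbf{j}|}\subset I^s$ is used; both are fine as you have them.
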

\begin{cor}\label{cor:R/Is-ProjResDGA}
As an $R/I$-algebra,
\[
\Tor^R_*(R/I,R/I^s)=
\mathrm{H}_*(R/I\otimes\mathbf{K}(R;I^s)_*,1\otimes\d^{(s)}).
\]
\end{cor}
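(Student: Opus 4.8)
The plan is to derive the corollary formally from Theorem~\ref{thm:R/Is-ProjResDGA} together with the standard homological description of $\Tor$. First I would observe that, by Theorem~\ref{thm:R/Is-ProjRes}, $\mathbf{K}(R;I^s)_*$ is a free resolution of $R/I^s$ over $R$. The usual computation of $\Tor$ by tensoring a projective resolution of one argument with the other then yields at once an isomorphism of graded $R/I$-modules
\[
\Tor^R_*(R/I,R/I^s)\iso\mathrm{H}_*(R/I\otimes\mathbf{K}(R;I^s)_*,1\otimes\d^{(s)}).
\]
Thus the entire content of the corollary lies in the multiplicative refinement, \ie, in promoting this to an isomorphism of $R/I$-algebras.

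Next I would account for the product on the right-hand side. By Theorem~\ref{thm:R/Is-ProjResDGA} the complex $(\mathbf{K}(R;I^s)_*,\d^{(s)})$ is a differential graded $R$-algebra, so applying $R/I\otimes(-)$ and using the ring structure of $R/I$ produces a differential graded $R/I$-algebra; its homology therefore carries a natural graded product. To finish, I must identify this product with the intrinsic algebra structure on $\Tor^R_*(R/I,R/I^s)$ coming from the ring structures of the two quotients. Two preliminary points enter here: that the augmentation $\epsilon^{(s)}$ is a ring homomorphism, which is immediate from the pairing $\mathbf{Q}^{(p)}_*\otimes\mathbf{Q}^{(q)}_*\lra\mathbf{Q}^{(p+q)}_*$ displayed before the theorem (monomials of total degree $\geq s$ being sent to $0$, consistent with the vanishing of $I^s$ in $R/I^s$); and that this pairing is a chain map lifting the multiplication of $R/I^s$.

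The identification itself I would carry out by the comparison-theorem argument standard for multiplicative resolutions. Once one adopts the description of the product on $\Tor^R_*(R/I,R/I^s)$ via a differential graded resolution of $R/I^s$, the multiplication $R/I^s\otimes R/I^s\lra R/I^s$ is realized at the chain level by a lift to a chain map $\mathbf{K}(R;I^s)_*\otimes\mathbf{K}(R;I^s)_*\lra\mathbf{K}(R;I^s)_*$, and any two such lifts are chain homotopic. Since the differential graded algebra multiplication furnishes one such lift, the product it induces on the homology of $R/I\otimes\mathbf{K}(R;I^s)_*$ coincides with the canonical one. This last step --- confirming that the explicitly constructed product genuinely realizes the intrinsic $\Tor$ product, rather than some a priori different pairing --- is the only real obstacle; granting the comparison theorem and the two preliminary checks, the corollary follows formally.
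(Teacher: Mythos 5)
Your argument is correct and is exactly the (implicit) justification the paper intends: the corollary is stated without proof as an immediate consequence of Theorems~\ref{thm:R/Is-ProjRes} and~\ref{thm:R/Is-ProjResDGA}, namely that a free resolution computes $\Tor$ after tensoring with $R/I$, and that a multiplicative (DGA) resolution induces the canonical $\Tor$ product on homology via the standard comparison-theorem argument. Your filling in of the details --- the augmentation being a ring map and the DGA multiplication furnishing a chain-level lift of the multiplication on $R/I^s$, unique up to chain homotopy --- is the standard route and matches the paper's approach.
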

Notice that in the differential graded $R/I$-algebra
$(R/I\otimes\mathbf{K}(R;I^s)_*,1\otimes\d^{(s)})$ we have
\begin{equation}\label{eqn:R/ItensorK(R,Is)-d}
1\otimes\d^{(s)}(t\otimes(x_0,x_1,\ldots,x_{s-1}))=
t\otimes(0,\del^{(1)}x_0,\del^{(2)}x_1,\ldots,\del^{(s-2)}x_{s-2}).
\end{equation}
We will exploit this in the next section.

\section{A spectral sequence}\label{sec:SS}

In order to compute $\Tor^R_*(R/I,R/I^s)$ explicitly we will set
up a double complex and consider one of the two associated spectral
sequences~\cite{Weibel}. We begin by defining the double complex
$(\mathrm{P}_{*,*},\dh,\dv)$ with
\begin{align*}
\mathrm{P}_{p,q}&=\mathbf{Q}^{(p)}[-p]_{q+p}
(\text{$=\mathbf{Q}^{(p)}_q$ as $R$-modules}), \\
\dh&=(-1)^p\del^{(p+1)}[-p]=\del^{(p+1)}, \\
\dv&=(-1)^p\d_{\mathbf{Q}}^{(p)}[-p]=\d_{\mathbf{Q}}^{(p)}.
\end{align*}
Considered as a homomorphism
\[
\dv\dh+\dh\dv\:\mathrm{P}_{p,q}\lra\mathrm{P}_{p+1,q+1},
\]
we have from Equation~\eqref{eqn:del-dQ},
\[
\dv\dh+\dh\dv=
\d_{\mathbf{Q}}^{(p+1)}\del^{(p+1)}+\del^{(p+1)}\d_{\mathbf{Q}}^{(p)}
=0.
\]
As the associated (direct sum) total complex $(\Tot\mathrm{P}_*,\dt)$
we obtain
\[
\Tot\mathrm{P}_n=\bigoplus_{k}\mathrm{P}_{k,n-k},
\quad
\dt=\dh+\dv.
\]
Notice that
\[
\Tot\mathrm{P}_n=\mathbf{K}(R;I^s)_n,
\quad
\dt=\d^{(s)}
\]
Hence
\[
\mathrm{H}_*(\Tot\mathrm{P}_*,\dt)=R/I^s.
\]
Applying the functor $R/I\otimes(\ )$ we obtain another double
complex $(\bar{\mathrm{P}}_{*,*},\dh,\dv)$ where
\[
\bar{\mathrm{P}}_{p,q}=R/I\otimes\mathrm{P}_{p,q}.
\]
The associated total complex $(\Tot\bar{\mathrm{P}}_*,\dt)$ has
\[
\Tot\bar{\mathrm{P}}_n=R/I\otimes\mathbf{K}(R;I^s)_n,
\quad
\dt=1\otimes\d^{(s)}
\]
and homology
\[
\mathrm{H}_*(\Tot\bar{\mathrm{P}}_*,\dt)=\Tor^R_*(R/I,R/I^s).
\]

Filtering by columns we obtain a spectral sequence with
\begin{equation}\label{eqn:SS}
\E^2_{p,q}=
\mathrm{H}_p(\mathrm{H}_q(\bar{\mathrm{P}}_{*,*},\dv),\dh)
\Lra\Tor^R_{p+q}(R/I,R/I^s).
\end{equation}
Here
\[
\mathrm{H}_*(\bar{\mathrm{P}}_{p,*},\dv)=
\mathrm{H}_*(R/I\otimes\mathbf{Q}^{(p)}_*,1\otimes\d_{\mathbf{Q}}^{(p)})
=\Tor^R_*(R/I,I^p/I^{p+1})
\]
and $\mathrm{H}_*(\mathrm{H}_q(\bar{\mathrm{P}}_{*,*},\dv),\dh)$
is the homology of the complex
\begin{multline*}
0\ra
\Tor^R_q(R/I,R/I)\xrightarrow{\del^{(1)}_*}\Tor^R_{q-1}(R/I,I/I^2)\lra \\
\cdots\lra\Tor^R_{q-s+2}(R/I,I^{s-2}/I^{s-1})\xrightarrow{\del^{(s-1)}_*}
\Tor^R_{q-s+1}(R/I,I^{s-1}/I^s)\ra0.
\end{multline*}
\begin{lem}\label{lem:R/I-delexact}
For $s\geq2$, the complex of graded $R/I$-modules
\begin{multline*}
\Tor^R_*(R/I,R/I)\xrightarrow{\del^{(1)}_*}\Tor^R_*(R/I,I/I^2)\lra \\
\cdots\lra\Tor^R_*(R/I,I^{s-2}/I^{s-1})\xrightarrow{\del^{(s-1)}_*}
\Tor^R_*(R/I,I^{s-1}/I^s)
\end{multline*}
is exact, hence the spectral sequence of~\emph{\eqref{eqn:SS}}
collapses at $\E^2$ to give
\[
\Tor^{R}_n(R/I,R/I^s)=
\begin{cases}
R/I&\text{\rm if $n=0$}, \\
\coker\del^{(s-1)}_*\:\Tor^R_n(R/I,I^{s-2}/I^{s-1})\lra\Tor^R_n(R/I,I^{s-1}/I^s)
&\text{\rm if $n\neq0$}.
\end{cases}
\]
With its natural $R/I$-algebra structure, $\Tor^{R}_*(R/I,R/I^s)$ has
trivial products.
\end{lem}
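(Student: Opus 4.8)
The plan is to prove two things: first that the displayed complex of $\Tor$-groups is exact, and then to deduce both the computation of $\Tor^R_*(R/I,R/I^s)$ and the triviality of products. Let me think about how to carry this out.

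The key computational input should be Corollary~\ref{cor:KoszulRes} and Corollary~\ref{cor:Is/I(s+1)-R/I-free}. Since $\mathbf{Q}^{(p)}_* = \mathbf{K}_* \otimes \mathrm{U}^{(p)}$ resolves $I^p/I^{p+1}$, we have
$$\Tor^R_*(R/I, I^p/I^{p+1}) = \mathrm{H}_*(R/I \otimes \mathbf{K}_* \otimes \mathrm{U}^{(p)}) = \Lambda_{R/I}(e_i) \otimes \mathrm{U}^{(p)}.$$
So the complex in the lemma is a complex of free modules over the exterior algebra $\Lambda_{R/I}(e_i)$, with terms $\Lambda_{R/I}(e_i) \otimes \mathrm{U}^{(p)}$. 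The maps $\del^{(p+1)}_*$ are induced by the differentials $\del^{(p+1)}$, and from equation~\eqref{eqn:R/ItensorK(R,Is)-d} I can read off that on $R/I \otimes \mathbf{K}(R;I^s)_*$ the differential is built entirely from the $\del^{(p)}$ maps (the internal Koszul differential vanishes after tensoring with $R/I$). The effect of $\del^{(p+1)}$ is to multiply a monomial $u_{\mathbf{i}}$ of degree $p$ by one more generator $u_j$, recording the extra Koszul generator $e_j$. In other words, after identifying $\mathrm{U}^{(p)}$ with the degree-$p$ part of a polynomial (divided-power) structure, the horizontal differential is essentially the Koszul-type differential on $\Lambda_{R/I}(e_i) \otimes \Gamma$, where the exterior and "monomial" variables are paired.

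So the heart of the argument is to identify the complex
$$\Lambda_{R/I}(e_i) \otimes \mathrm{U}^{(0)} \xrightarrow{\del^{(1)}_*} \Lambda_{R/I}(e_i) \otimes \mathrm{U}^{(1)} \to \cdots \to \Lambda_{R/I}(e_i) \otimes \mathrm{U}^{(s-1)}$$
as (a truncation of) a standard acyclic complex. The natural candidate is the Koszul-type complex computing the homology of a polynomial algebra resolved by an exterior algebra: if we let $V = \bigoplus_i R/I\, e_i$ and think of $\mathrm{U}^{(p)}$ as the degree-$p$ symmetric (or divided) power, then the total complex $\bigoplus_p \Lambda(V) \otimes \mathrm{U}^{(p)}$ with differential $\del_*$ is the standard complex whose homology is concentrated in the bottom corner. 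I would make this precise by exhibiting an explicit contracting homotopy on the complex of free $\Lambda_{R/I}(e_i)$-modules, or by recognizing it as the Koszul complex of the regular element situation in the symmetric/exterior Koszul duality. The existence of the element $\tau_{i_{s-1}}$ used in the resolution proof (which witnesses $\del^{(s)}(0,\ldots,0,c) = (0,\ldots,0,\tilde b_{s-1})$) strongly suggests the contracting homotopy: the same kind of operator $\tau$ that splits off one monomial variable against one exterior variable should give a homotopy $h$ with $\del_* h + h \del_* = \id$ on the relevant range.

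The main obstacle will be getting the signs and the combinatorics of the homotopy exactly right, especially handling the possibly infinite family of generators and the symmetry of monomials $u_{\mathbf{i}}$ under permutation of indices. I expect to construct $h$ by "stripping off" the last Koszul generator $e_j$ and appending the corresponding $u_j$ to the monomial, which is adjoint to what $\del^{(p+1)}_*$ does, and to check $\del_* h + h \del_* = \id$ in positive $\Tor$-degree. Once exactness of the complex is established, the spectral sequence~\eqref{eqn:SS} has $\E^2_{p,q} = 0$ except along the edge, forcing the stated identification of $\Tor^R_n(R/I,R/I^s)$ with $\coker \del^{(s-1)}_*$ for $n \neq 0$ and with $R/I$ for $n=0$; since the spectral sequence is concentrated in a single column at $\E^2$ it collapses with no extension problems. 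For the triviality of products, I would argue that every positive-degree element of $\Tor^R_*(R/I,R/I^s)$ is detected in the top filtration $p = s-1$ (the cokernel term), while the algebra multiplication raises the $\mathrm{U}$-degree; a product of two positive-degree classes lands in filtration degree $\geq 2(s-1) > s-1$ when $s \geq 2$, hence is zero. I would phrase this using the multiplicative structure of the spectral sequence, noting that the only surviving terms sit in filtration $p \leq s-1$ so any product of two elements of filtration $s-1$ vanishes for degree reasons.
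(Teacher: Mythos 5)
Your proposal is correct and takes essentially the same route as the paper: after computing the $\E^1$-page via Corollary~\ref{cor:Is/I(s+1)-R/I-free}, the paper likewise identifies the rows with the bigraded pieces of the Koszul complex $\Lambda_{R/I[\tilde u_i:i]}(\tilde e_i:i)$ resolving $R/I$ over the polynomial ring $R/I[\tilde u_i:i]$ and deduces exactness from regularity of the sequence $\tilde u_1,\tilde u_2,\ldots$ there (which is exactly your stated alternative to the contracting homotopy), and then kills products just as you do, using multiplicativity and the fact that all positive-degree classes sit in filtration $s-1$ while $\mathbf{Q}^{(r)}=0$ in $\mathbf{K}(R;I^s)_*$ for $r\geq s$. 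If you do pursue the explicit homotopy instead, order the generators and strip off the least index rather than symmetrizing, since the symmetric homotopy introduces denominators and would not work over a general $R$.
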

\begin{proof}
Our proof uses the observation that this complex is equivalent to part of the
Koszul complex $\Lambda_{R/I[\tilde u_i:i]}(\tilde e_i:i)$ which provides a
free resolution of $R/I=R/I[\tilde u_i:i]/(\tilde u_i:i)$ as an
$R/I[\tilde u_i:i]$-module. Up to a sign, the differential $\tilde\d$ agrees
with that of the complex in Lemma~\ref{lem:R/I-delexact}. The result follows
by exactness of the Koszul complex since the generators
$\tilde u_1,\tilde u_2,\ldots$ form a regular sequence in $R/I[\tilde u_i:i]$.
We now proceed to give the details.

For a commutative unital ring $\k$, make $\Lambda_{\k[\tilde u_i:i]}(\tilde e_i:i)$
a bigraded $\k$-algebra for which
\[
\bideg\tilde e_i=(1,0),\quad\bideg\tilde u_i=(1,-1).
\]
For each grading $p\geq0$ of $\Lambda_{\k[\tilde u_i:i]}(\tilde e_i:i)$,
\[
\Lambda_{\k[\tilde u_i:i]}(\tilde e_i:i)^p=
\bigoplus_{q\geq0}\Lambda_{\k[\tilde u_i:i]}(\tilde e_i:i)^{p+q,-q}
\]
and the differential
\[
\d^p\:\Lambda_{\k[\tilde u_i:i]}(\tilde e_i:i)^p
\lra
\Lambda_{\k[\tilde u_i:i]}(\tilde e_i:i)^{p+1}
\]
decomposes as a sum of components
\[
\d^{p+q,-q}\:\Lambda_{\k[\tilde u_i:i]}(\tilde e_i:i)^{p+q,-q}
\lra
\Lambda_{\k[\tilde u_i:i]}(\tilde e_i:i)^{p+q,-q-1},
\]
since
\[
\d^p(\tilde e_{i_1}\cdots\tilde e_{i_p}\tilde u_{j_1}\cdots\tilde u_{j_q})
=
\sum_{k=1}^p(-1)^{k-1}
\tilde e_{i_1}\cdots\tilde e_{i_{k-1}}\tilde e_{i_{k+1}}\cdots\tilde e_{i_p}
\tilde u_{i_k}\tilde u_{j_1}\cdots\tilde u_{j_q}.
\]
Exactness of $\d$ on $\Lambda_{\k[\tilde u_i:i]}(\tilde e_i:i)$ is equivalent
to the fact that for all pairs $p,q$,
\[
\ker\d^{p+q,-q}=\im\ker\d^{p+q,-q+1}.
\]
Hence for all $q$ we have
\[
\bigoplus_{p\geq0}\ker\d^{p+q,-q}=\bigoplus_{p\geq0}\im\d^{p+q,-q+1},
\]
which is equivalent to the exactness of
\[
\Lambda_{\k[\tilde u_i:i]}(\tilde e_i:i)\oTimes{\k}\k[\tilde u_i:i]_{q-1}
\xrightarrow{\d}
\Lambda_{\k[\tilde u_i:i]}(\tilde e_i:i)\oTimes{\k}\k[\tilde u_i:i]_q
\xrightarrow{\d}
\Lambda_{\k[\tilde u_i:i]}(\tilde e_i:i)\oTimes{\k}\k[\tilde u_i:i]_{q+1},
\]
where $\k[\tilde u_i:i]_n\subset\k[\tilde u_i:i]$ denotes the homogeneous
polynomials of degree $n$.

The statement about products is now immediate since the spectral sequence is
clearly multiplicative. Actually the full force of this is not really needed
since
\[
\Tor^{R}_*(R/I,R/I^s)\iso R/I\oplus\coker\del^{(s-1)}_*
\]
and products of elements in the bottom filtration $\coker\del^{(s-1)}_*$ are
zero in $\E^\infty=\E^2$.
\end{proof}
We can strengthen our hold on $\Tor^R_*(R/I,R/I^s)$ using the ideas in the
last proof.
\begin{prop}\label{prop:Tor-freeR/I}
For $s\geq1$, $\Tor^R_*(R/I,R/I^s)$ is a free $R/I$-module.
\end{prop}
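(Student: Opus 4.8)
The plan is to reduce to the freeness of a single cokernel and then pin down an explicit monomial basis for it. For $s=1$ the result is immediate from Corollary~\ref{cor:KoszulRes}, since $\Tor^R_*(R/I,R/I)=\Lambda_{R/I}(e_i:i\geq1)$ is free over $R/I$. So assume $s\geq2$. By Lemma~\ref{lem:R/I-delexact} there is an isomorphism of $R/I$-modules $\Tor^R_*(R/I,R/I^s)\iso R/I\oplus\coker\del^{(s-1)}_*$, in which the summand $R/I$ is already free; hence it suffices to prove that $\coker\del^{(s-1)}_*$ is a free $R/I$-module.

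Write $\k=R/I$ and $A=\k[\tilde u_i:i]$, and recall from the proof of Lemma~\ref{lem:R/I-delexact} that $\Tor^R_*(R/I,I^p/I^{p+1})=\Lambda_\k(\tilde e_i)\otimes U^{(p)}$, and that these assemble over all $p\geq0$ into the Koszul complex $\Lambda_A(\tilde e_i)$ resolving $\k$, with the maps $\del^{(p)}_*$ identified with the Koszul differential $\d$. Writing $C_p=\Lambda_\k(\tilde e_i)\otimes U^{(p)}$, each $C_p$ is a free $\k$-module; grading by the total number of generators $\tilde e_i,\tilde u_i$ occurring (the first bidegree $a+b$), in every fixed total degree $N\geq1$ the complex $(C_\bullet,\d)$ is a bounded exact complex of free $\k$-modules. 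Since the full complex is exact in these degrees, $\im\del^{(s-1)}_*=\ker(\d\:C_{s-1}\to C_s)$, and the first isomorphism theorem gives $\coker\del^{(s-1)}_*\iso\im(\d\:C_{s-1}\to C_s)=B_s$, the module of Koszul boundaries in column $s$. Moreover the exact sequences $0\to B_p\to C_p\to B_{p+1}\to0$ split over $\k$, with free $C_p$.

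To prove $B_s$ free I would exhibit an explicit $\k$-basis. Order the generators and call a monomial $\tilde e_S\tilde u^{\alpha}$ with $|\alpha|=s-1$ \emph{standard} if $S\neq\emptyset$ and the largest index occurring in it lies in $S$ (is carried by an exterior generator $\tilde e$). The claim is that the classes of the standard monomials form a $\k$-basis of $\coker\del^{(s-1)}_*$. Spanning is routine: purely polynomial monomials (those with $S=\emptyset$) are boundaries, hence vanish in the cokernel, while the boundary relations coming from $\d(\tilde e_i\tilde e_j\cdots)$, namely $\tilde u_i\tilde e_j\equiv\tilde u_j\tilde e_i$, let one move the largest variable present into the exterior factor, so every class reduces to a $\k$-combination of standard monomials.

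The independence of the standard monomials is the crux and the main obstacle. Over an arbitrary quotient ring $\k=R/I$ one cannot argue by counting ranks, because the homological splittings above yield only that $B_s$ is projective---in fact stably free, as $C_p\iso B_p\oplus B_{p+1}$---and over rings such as $R/I=\Z/6$ stably free modules need not be free. I would therefore establish independence by a normal-form (Gr\"obner-type) argument: fixing a monomial order for which $\tilde u_j\tilde e_i$ (with $i<j$) is the leading term of the Koszul relation, one shows that the leading terms of the elements of $\im\d$ are exactly the non-standard monomials, so that no nontrivial $\k$-combination of standard monomials can lie in $\im\d$. Equivalently, one can construct an explicit $\k$-linear contracting homotopy for the positive-degree part of $\Lambda_A(\tilde e_i)$, built up one variable at a time and hence with integer coefficients and no division, and read off from it a monomial splitting of $0\to B_{s-1}\to C_{s-1}\to B_s\to0$ presenting $B_s$ as the free $\k$-module on the standard monomials. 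Since in each total degree only finitely many monomials occur, the case of infinitely many generators $u_i$ causes no extra difficulty and is handled degree by degree.
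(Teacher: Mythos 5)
Your reduction is the same as the paper's up to the point where everything hinges on showing that $\coker\del^{(s-1)}_*\iso\im\bigl(\d\:C_{s-1}\to C_s\bigr)$ is free, and you correctly identify the genuine subtlety: the splicing/splitting argument only yields a stably free $R/I$-module, which over an arbitrary quotient $R/I$ need not be free. From there, however, you and the paper diverge. The paper's proof never produces a basis: it observes that in the monomial basis each differential $\d^{p+q,-q}$ has integer matrix entries, so the whole complex is obtained from the case $\k=\Z$ by applying $(-)\oTimes{\Z}\k$; it then splits the $\Z$-form into finite-rank pieces $M^{p+q,-q}(S)$ indexed by finite subsets $S$ of the index set, notes that by exactness each $\ker\d^{p+q,-q}_{M(S)}$ is a saturated (indivisible) submodule of a finitely generated free $\Z$-module and hence a direct summand, concludes that $\im\d$ is a free direct summand over $\Z$, and finally base-changes the resulting splitting to $\k$. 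Your route instead proposes an explicit monomial basis (the ``standard'' monomials $\tilde e_S\tilde u^{\alpha}$ with the largest index carried by an exterior generator) together with a Gr\"obner/normal-form or contracting-homotopy argument for linear independence. This is a legitimate and in some ways stronger approach --- it would give an explicit $R/I$-basis of $\Tor^R_*(R/I,R/I^s)$, not just abstract freeness, and the homotopy for $\bigotimes_i\bigl(\k[\tilde u_i]\tilde e_i\xrightarrow{\tilde u_i}\k[\tilde u_i]\bigr)$ indeed has integer coefficients, so it shares the paper's underlying insight that the splitting is defined over $\Z$. But you should be aware that the decisive step --- that the leading terms of $\im\d$ are exactly the non-standard monomials, or equivalently the explicit description of the homotopy --- is only announced, not verified, and it is precisely the part that carries all the content; as written your argument is a plan rather than a proof at its crux. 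If you want the shortest complete argument, the paper's device (saturation of kernels in finitely generated free $\Z$-modules, then tensoring up) lets you bypass any combinatorics of bases entirely.
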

\begin{proof}
The case $s=1$ is of course a consequence of Corollary~\ref{cor:KoszulRes}.

Using the notation of the proof of Lemma~\ref{lem:R/I-delexact}, notice that
in terms of the $\k$-basis of elements
$\tilde e_{i_1}\cdots\tilde e_{i_p}\tilde u_{j_1}\cdots\tilde u_{j_q}$, each
$\d^{p+q,-q}$ is actually given by a $\Z$-linear combination.
Therefore we can reduce to the case where $\k=\Z$, and then tensor up over $\Z$
with an arbitrary $\k$.

For each pair $p,q\geq0$, $\Lambda_{\Z[\tilde u_i:i]}(\tilde e_i:i)^{p+q,-q}$
breaks up into a direct sum of $\Z$-submodules $M^{p+q,-q}(S)$ where $S$ is a
set of exactly $p+q$ elements of the indexing set for the $u_i$'s and $M^{p+q,-q}(S)$
is spanned by the finitely many elements
$\tilde e_{i_1}\cdots\tilde e_{i_p}\tilde u_{j_1}\cdots\tilde u_{j_q}$
with
\[
S=\{i_1,\ldots,i_p,j_1,\ldots j_q\},
\quad
i_1<i_2<\cdots<i_p.
\]
Notice that on restriction we have
\[
\d^{p+q,-q}_{M(S)}=\d^{p+q,-q}\:M^{p+q,-q}(S)\lra M^{p+q,-q-1}(S).
\]
By exactness, $\im\d^{p+q,-q+1}_{M(S)}=\ker\d^{p+q,-q}_{M(S)}$. Since $M^{p+q,-q}(S)$
is a finitely generated free module, $\ker\d^{p+q,-q}_{M(S)}$ is indivisible in
$M^{p+q,-q}(S)$ and so is a summand. Hence $\im\d^{p+q,-q+1}_{M(S)}$ is always a
summand of $M^{p+q,-q}(S)$. Taking the sum over all $S$ and then over all $p$ we
find that for each $q$,
\[
\im\d\:\Lambda_{\Z[\tilde u_i:i]}(\tilde e_i:i)\oTimes{\Z}\Z[\tilde u_i:i]_{q-1}
\lra
\Lambda_{\Z[\tilde u_i:i]}(\tilde e_i:i)\oTimes{\Z}\Z[\tilde u_i:i]_q
\]
is a summand in
\[
\Lambda_{\Z[\tilde u_i:i]}(\tilde e_i:i)\oTimes{\Z}\Z[\tilde u_i:i]_q.
\qedhere
\]
\end{proof}

\section{Appendix: Resolutions of extensions}\label{sec:ResExtns}

In this Appendix we recall some standard facts about extensions of $R$-modules,
see~\cite{Weibel}, and also give an interpretation in terms of the derived
category of complexes of $R$-modules. Our aim is to put the construction of
the complex $(\mathbf{K}(R;I^s)_*,\d^{(s)})$ into a broader context for the
benefit of those unfamiliar with such ideas. In fact, we found this complex
by iterating the splicing construction for the resolution of an extension
given below; in our case this works well to give a very concrete and manageable
resolution.

Suppose that
\begin{equation}\label{eqn:Ext}
\mathcal{E}\:\quad 0\ra L\lra M\lra N\ra0
\end{equation}
is a short exact sequence of $R$-modules and
\[
P_*\xrightarrow{\epsilon}N\ra0
\]
is a projective resolution of $N$. Then there are homomorphisms
$\epsilon_0\:P_0\lra M$ and $\epsilon_1\:P_1\lra L$ which fit into
a commutative diagram
\begin{equation}\label{eqn:Ext-Res}
\begin{CD}
0@<<< N    @<<< P_0@<<< P_1@<<< P_2@<<<\cdots \\%
@.    @|   @V\epsilon_0VV    @V\epsilon_1VV     \\
0@<<< N    @<<< M @<<<   L  @<<< 0 @. @.
\end{CD}
\end{equation}
Then $\epsilon_1$ is a cocycle in $\Hom_R(P_1,L)$ which represents
an element $\Theta(\mathcal{E})\in\Ext^1_R(N,L)$ classifying the
extension $\mathcal{E}$.

Now let $Q_*\xrightarrow{\eta}L\ra0$ be a projective resolution of
$L$ with differential $d_Q$ and $Q[-1]_*$ its suspension. Then
the differential $d_{Q}[-1]$ in $Q[-1]_*$ is given by
\[
d_{Q}[-1]x=-d_Qx.
\]
It is well known that in the derived category $\mathcal{D}^\flat(R)$
of bounded below complexes of $R$-modules,
\begin{equation}\label{eqn:Ext1=HomDerivCat[-1]}
\Ext^1_R(N,L)\iso\Hom_{\mathcal{D}^\flat(R)}(P_*,Q[-1]_*).
\end{equation}
Given the diagram~\eqref{eqn:Ext-Res}, there is an extension to
a diagram
\begin{equation}\label{eqn:Ext-Res-extension}
\begin{CD}
0@<<< N    @<<< P_0@<<< P_1@<<< P_2@<<<     P_3@<<<\cdots \\%
@.    @|   @V\epsilon_0VV  @V\epsilon'_1VV     @V\epsilon'_2VV @V\epsilon'_3VV   \\
0@<<< N    @<<< M @<<< Q_0 @<<<    Q_1         @<<<Q_2@<<< \cdots
\end{CD}
\end{equation}
and hence the element
\[
\begin{CD}
0 @<<< P_0@<<< P_1@<<< P_2@<<<     P_3@<<<\cdots \\%
    @|   @V0VV    @V\epsilon'_1VV @V\epsilon'_2VV  @V\epsilon'_3VV   \\
0 @<<< 0 @<<<  Q_0 @<<<    Q_1 @<<<Q_2@<<< \cdots
\end{CD}
\]
which represents an element of $\Hom_{\mathcal{D}^\flat(R)}(P_*,Q[-1]_*)$.
Conversely, a diagram with exact rows such as~\eqref{eqn:Ext-Res-extension}
clearly gives rise to an extension of the form~\eqref{eqn:Ext-Res}. Perhaps
a more illuminating way to view this morphism in $\mathcal{D}^\flat(R)$ is
in terms of the diagram
\[
\begin{CD}
0 @<<< P_0@<<< P_1@<<< P_2@<<<     P_3@<<<\cdots \\
@|   @V0VV    @V\epsilon_1VV @VVV  @VVV   \\
0 @<<< 0 @<<<  L[-1] @<<<    0 @<<<0@<<< \cdots             \\
@|   @AAA    @A\epsilon AA @AAA  @AAA  \\
0 @<<< 0 @<<<  Q[-1]_0 @<<<    Q[-1]_1 @<<<Q[-1]_2@<<< \cdots
\end{CD}
\]
where the augmentation $\epsilon\:Q[-1]_*\lra L[-1]$ is a homology
equivalence, hence an isomorphism in $\mathcal{D}^\flat(R)$, so
the composite
\[
P_*\xrightarrow{\ph{a}\epsilon_1\ph{b}}L\xrightarrow{\epsilon^{-1}}Q_*
\]
gives an element of $\Hom_{\mathcal{D}^\flat(R)}(P_*,Q[-1]_*)$.
Of course all of these classes agree with $\Theta(\mathcal{E})$.
Notice that $\Theta(\mathcal{E})$ is determined by the homomorphism
$\epsilon_1\:P_1\lra Q[-1]_1=Q_0$ lifting the map $P_1\lra L$.

We also recall a well known related result, see~\cite{Weibel}.
\begin{prop}\label{prop:SES-SplicingRes}
For a ring $R$, let
\[
0\la A\lla B\lla C\la0
\]
be short exact and $P_*\lra A\ra0$ and $Q_*\lra C\ra0$ projective
resolutions. Then there is a projective resolution of the
form $(P\oplus Q)_*\lra B\ra0$ and a commutative diagram
\[
\begin{CD}
0@<<<P_*@<<<(P\oplus Q)_*@<<<Q_*@<<<0  \\
@. @VVV      @VVV    @VVV @. \\
0@<<<A@<<<B@<<<C@<<<0  \\
\end{CD}
\]
\end{prop}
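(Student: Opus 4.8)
The plan is to treat this as the Horseshoe Lemma and to build the middle resolution by twisting the direct sum differential with the connecting maps already produced in the Appendix. Write $\mathcal E$ for the given sequence, with structure maps $j\:C\lra B$ and $\pi\:B\lra A$, and let $\epsilon_P\:P_0\lra A$, $\epsilon_Q\:Q_0\lra C$ be the augmentations and $d_P$, $d_Q$ the differentials. Set $(P\oplus Q)_n=P_n\oplus Q_n$; each such module is projective, being a sum of projectives. I insist that the summand inclusion $Q_*\lra(P\oplus Q)_*$ and the projection $(P\oplus Q)_*\lra P_*$ be chain maps. This forces the differential to have the block form $d(p,q)=(d_Pp,\ \tau_np+d_Qq)$ for connecting homomorphisms $\tau_n\:P_n\lra Q_{n-1}$, and automatically makes $0\ra Q_*\lra(P\oplus Q)_*\lra P_*\ra0$ a short exact sequence of complexes, split in each degree.

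The required maps are exactly those furnished by diagram~\eqref{eqn:Ext-Res-extension} applied to $\mathcal E$ (so $L=C$, $M=B$, $N=A$): projectivity of $P_0$ lifts $\epsilon_P$ through $\pi$ to a map $\epsilon_0=\sigma\:P_0\lra B$, and the inductive lifting of~\eqref{eqn:Ext-Res-extension}, using projectivity of each $P_n$ together with acyclicity of the augmented complex $Q_*\lra C$, yields the maps $\epsilon'_n\:P_n\lra Q_{n-1}$. I would put $\tau_n=(-1)^n\epsilon'_n$, so that the commutativity $\epsilon'_nd_P=d_Q\epsilon'_{n+1}$ built into~\eqref{eqn:Ext-Res-extension} becomes the relation $d_Q\tau_{n+1}+\tau_nd_P=0$, which is precisely $d^2=0$; equivalently the $\tau_n$ assemble into a chain map $P_*\lra Q[-1]_*$ representing $\Theta(\mathcal E)$. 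I then define $\epsilon_B\:(P\oplus Q)_0\lra B$ by $\epsilon_B(p,q)=\sigma(p)+j\epsilon_Q(q)$. The remaining squares of~\eqref{eqn:Ext-Res-extension} give $\pi\epsilon_B=\epsilon_P\circ\mathrm{pr}$, $\epsilon_B|_{Q_0}=j\epsilon_Q$, and $\epsilon_B\circ d=0$, so $\epsilon_B$ extends the short exact sequence of complexes over the bottom row $0\la A\lla B\lla C\la0$.

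It then remains only to check exactness of $(P\oplus Q)_*\xrightarrow{\epsilon_B}B\ra0$, and here I would invoke the long exact homology sequence of $0\ra Q_*\lra(P\oplus Q)_*\lra P_*\ra0$. Since $P_*$ and $Q_*$ are resolutions, $\mathrm{H}_n(P_*)=\mathrm{H}_n(Q_*)=0$ for $n\geq1$, so $\mathrm{H}_n\big((P\oplus Q)_*\big)$ is squeezed between zeros and vanishes for $n\geq1$, irrespective of the choice of $\tau$. The tail of the sequence reads $0\ra C\lra\mathrm{H}_0\big((P\oplus Q)_*\big)\lra A\ra0$, and comparing this extension with $\mathcal E$ through $\epsilon_B$, which is the identity on the sub $C$ and on the quotient $A$, the five lemma identifies $\mathrm{H}_0\big((P\oplus Q)_*\big)\iso B$. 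Hence the augmented complex is a resolution, and the commutative diagram of short exact sequences is exactly the one built above.

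The only step with genuine content is the inductive construction of the $\tau_n=\pm\epsilon'_n$: one must verify at each stage that the map one wishes to lift actually lands in the kernel of the preceding differential of $Q_*$, so that projectivity of $P_n$ supplies the lift. This is the heart of the Horseshoe Lemma and is already carried out in~\eqref{eqn:Ext-Res-extension}; granted those maps, the suspension-sign bookkeeping that yields $d^2=0$ and the homological argument above are entirely routine.
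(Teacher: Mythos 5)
Your proposal is correct and follows essentially the same route as the paper: both build $(P\oplus Q)_*$ as the degreewise direct sum with differential twisted by the chain map $P_*\lra Q[-1]_*$ that classifies the extension (the maps $\epsilon'_n$ of diagram~\eqref{eqn:Ext-Res-extension}), exactly as in the formula $\d(x,y)=(\d x,\del_nx+\d_Qy)$. You simply make explicit several points the paper leaves implicit --- the sign $(-1)^n$ turning commutativity into the relation $\d_Q\del_n+\del_{n-1}\d_P=0$, the definition of the augmentation on $(P\oplus Q)_0$, and the exactness check via the long exact homology sequence --- all of which are carried out correctly.
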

\begin{proof}
The extension is classified by an element of
$\Hom_{\mathcal{D}^\flat(R)}(P_*,Q[-1]_*)$ corresponding
to a chain map $\del_*\:P_*\lra Q[-1]_*$. Viewed as a
sequence of maps $\del_n\:P_n\lra Q[-1]_{n-1}$, $\del_*$
must satisfy
\begin{equation}\label{eqn:Ext-DiffsCompatable}
\d_Q\del_n+\del_{n-1}\d_P=0\quad(n\geq1).
\end{equation}
The formula
\[
\d(x,y)=(\d x,\del_nx+\d_Qy)\quad(x\in P_n,\;y\in Q_n)
\]
defines the differential in $(P\oplus Q)_*$.
\end{proof}

\end{document}